\documentclass[12pt,oneside]{amsart}
\usepackage[margin=1in]{geometry}
\usepackage[utf8]{inputenc}
\usepackage{amsmath}
\usepackage{amsthm}
\usepackage{amsfonts}
\usepackage{amssymb}
\usepackage[hidelinks]{hyperref}
\usepackage{bm}
\usepackage{graphicx}
\usepackage{mathrsfs}
\usepackage{fdsymbol}
\usepackage{multirow}
\usepackage{colortbl}
\usepackage{enumerate}
\usepackage{tikz}

\usepackage[all]{xy}
\xyoption{2cell}
\UseAllTwocells
\newcommand{\xym}{\ensuremath \xymatrix@1}

\usepackage[left]{lineno}

\definecolor{lightgray}{gray}{0.85}
\definecolor{dgray}{gray}{0.35}
\definecolor{sgray}{gray}{0.55}


\numberwithin{equation}{section}
\numberwithin{figure}{section}

\def\.{\cdot}
\def\:{\odot}

     
    \def\f{{\mathrm{f}}}

    \def\r{{\mathrm{r}}}


  
\def\DD{{\mathbb{D}}} \def\EE{{\mathbb{E}}}

\def\MM{{\mathbb{M}}} \def\NN{{\mathbb{N}}}

%
	
 \def\bB{{\mathbf{B}}}

  \def\cC{{\mathcal{C}}}
\def\cD{{\mathcal{D}}} \def\cE{{\mathcal{E}}} 
  
  \def\cL{{\mathcal{L}}}
\def\cM{{\mathcal{M}}}  
\def\cP{{\mathcal{P}}}  \def\cR{{\mathcal{R}}}

\def\leqL{{\;\leq_{\cL}\;}} \def\leqR{{\;\leq_{\cR}\;}} 



\def\bmx{\left[\begin{array}}
\def\emx{\end{array}\right]}

\DeclareMathOperator{\CE}{\twoheadrightarrow}
\DeclareMathOperator{\CM}{\rightarrowtail}

\def\beqn{\begin{eqnarray*}}
\def\eeqn{\end{eqnarray*}}
\def\barr{\begin{array}}
\def\earr{\end{array}}

\def\benn{\begin{enumerate}[$(1)$]}
\def\ben{\begin{enumerate}}
\def\een{\end{enumerate}}

\def\bit{\begin{itemize}\setlength\itemsep{-0.3em}}
\def\eit{\end{itemize}}

\def\cM{\mathcal{M}}

\def\beqn{\begin{eqnarray*}}
\def\eeqn{\end{eqnarray*}}
\def\barr{\begin{array}}
\def\earr{\end{array}}

\def\bit{\begin{itemize}}
\def\eit{\end{itemize}}

\def\een{\end{enumerate}}
\def\l({\left(}
\def\r){\right)}

\def\bmx{\left[\begin{array}}
\def\emx{\end{array}\right]}

\newtheorem{theorem}{Theorem}[section]

\newtheorem{lemma}[theorem]{Lemma}
\newtheorem{proposition}[theorem]{Proposition}
\newtheorem{corollary}[theorem]{Corollary}
\newtheorem{definition}[theorem]{Definition}
\newtheorem{example}[theorem]{\textit{Example}}


\addtolength{\parskip}{.6\baselineskip}

\usepackage{tikz}

\begin{document}
\title[Equivalences for (2-)categories of monoids]{Equivalences for the (2-)categories of monoids and unital semigroups}

\author{Xavier Mary} 



\begin{abstract}
We construct a category equivalent to the category $\mathbf{Mon}$ of monoids and monoid homomorphisms, based on categories with strict factorization systems. This equivalence is then extended to the category $\mathbf{Mon_s}$ of unital semigroups and semigroup homomorphisms. By introducing suitable natural transformations, we turn these equivalences into 2-equivalences between 2-categories. The 2-category $\mathbf{Mon_s^2}$ constructed this way proves the good one to study Morita equivalence of monoids.
  \end{abstract}
	
\maketitle

\section*{Introduction}\label{Sec:Intro}

It is standard to consider a monoid $M$ as a one-object category (its `delooping' $\bB M$), where elements of the monoid become morphisms of the category. Or, even better, as a pointed and connected small category (where there is exactly one isomorphism class of objects). This viewpoint, in particular, leads to an equivalence between the ($2$-)category $\mathbf{Mon}$ of monoids and $\mathbf{ConnectedCat*}$, the ($2$-)category of pointed and connected small categories (see notably the lecture by Baez and Shulman on the subject \cite[Section 5.6]{Baez2007lectures}).

In this article, we take the opposite point of view and associate to any monoid a kind of `loop space', which is a small category whose objects are the elements of the monoid, described in Section \ref{Sec:1}. This construction is due to Costa and Steinberg \cite{costa2015Schutzenberger} in the semigroup case, but is actually a special case of a more general categorical construction when one considers monoids. This category, called the {S}ch{\"u}tzenberger category of the monoid, happens to be equipped canonically with a strict factorization system with additional properties. Such systems are investigated in Section \ref{Sec:SFS}. This crucial property allows us to construct a category equivalent to the category $\mathbf{Mon}$ of monoids and monoid homomorphisms in Section \ref{Sec:4}. This equivalence is then extended to the category $\mathbf{Mon_s}$ of unital semigroups and semigroup homomorphisms. By introducing suitable natural transformations, we finally extend these equivalences to 2-equivalences between 2-categories. It turns out that the 2-category $\mathbf{Mon_s^2}$ constructed this way has the following fundamental property: adjoint equivalence in this 2-category corresponds precisely to Morita equivalence of monoids. This is the content of Section \ref{Sec:Nat}.

\section{Small categories - Constructions}\label{Sec:0Cat}

We assume basic knowledge of category theory. If $\cC$ is a small category, we denote by $\cC_0$ its set of objects and by $\cC_1$ its set of arrows. For any two objects $a,b\in \cC$, we let $\cC(a,b)$ be the set of arrows with source $a$ and target $b$. In this article, composition of arrows will be denoted by juxtaposition: 
 \[\xymatrix{
a\ar@/^/[r]^f& b \ar@/^/[r]^g& c}=\xymatrix{
a\ar@/^/[rr]^{fg}& & c}\]

\subsection{Strict Factorization Systems}\label{Sec:SFS}

The following definition and results regarding strict factorization systems are taken from Grandis \cite{grandis2000weak}. 
Let $\cC$ be a small category. A \emph{strict factorization system} (SFS) on $\cC$ is a pair $(\cE,\cM)$ of wide subcategories (same class of objects) of $\cC$ such that any morphism $f\in \cC_1$ admits a unique decomposition $f=em$ with $e\in \cE_1$ and $m\in \cM_1$. 
In the following, by a \emph{category with a strict factorization system} (CSFS) we will mean a triple $(\cC,\cE,\cM)$ where $(\cE,\cM)$ is a SFS on $\cC$.

For any SFS $(\cE,\cM)$ it holds that \cite{grandis2000weak}:
\benn
\item $\cE_1\cap \cM_1=\cC_0$ the set of identities;
\item $\cE_1$ and $\cM_1$ are orthogonal in $\cC$;
\item there is a unique \emph{orthogonal factorization system} (OFS) $(\EE,\MM)$ spanned by $(\cE,\cM)$, where morphisms of $\EE$ (resp. $\MM$) are of the form $ei$ (resp. $im$), with $i$ an iso and $e\in \cE$ (resp. $m\in \cM$).
\een
Maps in $\cE$ will be depicted by $\twoheadrightarrow$ and maps in $\cM$  will be depicted by $\rightarrowtail$.
Duality for SFS  takes a simple form: $(\cE,\cM)$ is a SFS for $\cC$ iff $(\cM^{op},\cE^{op})$ is a SFS for $\cC^{op}$. 

The SFS $(\cE,\cM)$ is epi-monic (or proper) if all $\cE-$maps  are epi (in $\cC$) and all $\cM$-maps are monics (in $\cC$). 

\begin{example}\label{EX1}
Let $X$ be a set, and $\cP(X)_0$ the set of parts of $X$. Let $\cP(X)_1$ be the set of functions between parts of $X$. 
Then any function $f:A\to B$ ($A,B\subseteq X$) can be uniquely factored into a surjection $e$ followed by an inclusion $m$.
\[\xymatrix{
\;A\;\ar@/^/[rr]^f \ar@/_/@<-.5ex>@{->>}[dr]_e & & \;B\;\\
&  \;Im(f)\;\; \ar@/_/@{>->}[ur]_m& }\]
The pair $(\cE=\{surjections\}, \cM=\{inclusions\})$ (from $X$ to $X$) is a strict factorization system on $\cP(X)$.
\end{example}

\begin{example}\label{EX2}
Let $\NN$ be the set of natural numbers. Define a category $\cC$ with objects $\cC_0=\NN$. For any $a,b\in \NN$, let $\cC(a,b)=\{x\in \NN |x \leq a \text{ and } x\leq b\}$. We denote such arrow $x\in \cC(a,b)$ by $a\overset{x}{\longrightarrow} b$. Composition is given by   \[\xymatrix{
a\ar@/^/[r]^x& b \ar@/^/[r]^y& c}=\xymatrix{
a\ar@/^/[rr]^{x\wedge y}& & c}\]
Let $\cE$ be the wide subcategory of $\cC$ with $\cE(a,b)=\{b\}$ if $b\leq a$ and $\cE(a,b)=\emptyset$ otherwise, and $\cM$ be the wide subcategory of $\cC$ with $\cM(a,b)=\{a\}$ if $a\leq b$ and $\cM(a,b)=\emptyset$ otherwise. Then $(\cC,\cE,\cM)$ is a CSFS. The unique factorization of $a\overset{x}{\longrightarrow} b$ is \[\xymatrix{
\;a\;\ar@/^/[rr]^{x} \ar@/_/@<-.5ex>@{->>}[dr]_{x} & & \;b\;\\
& \;x\;\; \ar@/_/@{>->}[ur]_{x}& }
\] 
\end{example}

\begin{example}\label{EX3}
Let $\NN$ be the set of natural numbers. Define a category $\cC$ with objects $\cC_0=\NN$. For any $a,b\in \NN$, let $\cC(a,b)=\{x\in \NN |a \leq x \text{ and } b\leq x\}$. We denote such arrow $x\in \cC(a,b)$ by $a\overset{x}{\longrightarrow} b$. Composition is given by   \[\xymatrix{
a\ar@/^/[r]^x& b \ar@/^/[r]^y& c}=\xymatrix{
a\ar@/^/[rr]^{x-b+y}& & c}\]
Let $\cE$ be the wide subcategory of $\cC$ with $\cE(a,b)=\{b\}$ if $a\leq b$ and $\cE(a,b)=\emptyset$ otherwise, and $\cM$ be the wide subcategory of $\cC$ with $\cM(a,b)=\{a\}$ if $b\leq a$ and $\cM(a,b)=\emptyset$ otherwise. Then $(\cC,\cE,\cM)$ is a CSFS. The unique factorization of $a\overset{x}{\longrightarrow} b$ is \[\xymatrix{
\;a\;\ar@/^/[rr]^{x} \ar@/_/@<-.5ex>@{->>}[dr]_{x} & & \;b\;\\
& \;x\;\; \ar@/_/@{>->}[ur]_{x}& }
\] 
(the diagram is commutative since $x-x+x=x$).
\end{example}

We now introduce some definitions. Let $\cC,\cE,\cM$ be a CSFS.
\benn
\item The SFS $(\cE,\cM)$ is \emph{thin} if $\cE$ and $\cM$ are thin categories (there is at most one map between any two objects)\footnote{Thin categories are also called preorders}. it is easy to prove that such thin SFSs are epi-monic;
\item The SFS $(\cE,\cM)$ is \emph{unital at $\zeta\in \cC_0$} if $\zeta$ is both a \emph{strict} initial point of $\cE$ and a \emph{strict} terminal point of $\cM$ (that is, for any $a\in \cC_0$, there exist \emph{strictly unique} maps $\zeta \CE a$ and $a\CM \zeta$. This implies that $\cC$ is connected (no Hom-Set of $\cC$ is empty); 
\item The SFS is \emph{complete} if for any $e:a\CE b\in \cE_1$ and $m:a\CM c\in \cM_1$ there exists $d\in \cC_0$ and $e':c\CE d\in \cE_1$, $m':b\CM d\in \cM_1$, such that $em'=me'$, and dually for any $e':c\CE d\in \cE_1$ and $m':b\CM d\in \cM_1$ there exists $a\in \cC_0$ and $e':a\CE b\in \cE_1$, $m':a\CM c\in \cM_1$, such that $em'=me'$. Pictorially, we have
\begin{center}
\vspace{-1cm}
\[\xymatrix{
& \;c\; \ar@{.>>}[dr]^{e'}& \\
\;a\; \ar@{>->}[ur]^m \ar@{->>}[dr]_e & & \;d\;\\
& \;b\; \ar@{>.>}[ur]_{m'} & }\qquad \qquad
\xymatrix{
& \;c\; \ar@{->>}[dr]^{e'}& \\
\;a\; \ar@{>.>}[ur]^m \ar@{.>>}[dr]_e & & \;d\;\\
& \;b\; \ar@{>->}[ur]_{m'} & }
\]
\end{center}
\een
If the SFS is complete and unital at $\zeta$ then it has the following two properties: 
\benn
\item for any $e:a\CE x\in \cE_1$ there exists (a necessarily unique) $m':x\CM u$ such that 
\[\xymatrix{
& \;\zeta\; \ar@{->>}[dr]& \\
\;a\; \ar@{>->}[ur] \ar@{->>}[dr]_e & & \;u\;\\
& \;x\; \ar@{>.>}[ur]_{m'} & }
\]  
\item Dually for any $m':x\CM b\in \cM_1$ there exists (a necessarily unique) $e:v\CE x\in \cE_1$ such that \[\xymatrix{
& \;\zeta\; \ar@{->>}[dr]& \\
\;v\; \ar@{>->}[ur] \ar@{.>>}[dr]_e & & \;b\;\\
& \;x\; \ar@{>->}[ur]_{m'} & }\]
\een
These two properties are in fact equivalent to completeness, as explained in the following diagram (the second case is dual):
\begin{center}
\vspace{-1cm}
\[\xymatrix{
&  &\;\zeta\;\ar@{.>>}[dr]& \\
& \;c\; \ar@{.>>}[dr] \ar@{>->}[ur] & &\;u\;\\
\;a\; \ar@{>->}[ur] \ar@{->>}[dr]& & d\ar@{>.>}[ur]&\\
& \;b\;\; \ar@{>.>}[ur] & &}
\]
\end{center}
Observe also that, in this case (where the SFS is unital), the completion is unique.

By a $u-CTSFS$ (resp.\ a $uc-CSFS$, a $uc-CTSFS$) we mean a small category $(\cC,\zeta)$ with unital and thin (resp.\ unital and complete, resp.\ unital, complete and thin) factorization system $(\cE,\cM)$ and unit $\zeta$.

To better grasp the notions, let us consider the previous examples.

\begin{example}
We consider the setting of Example \ref{EX1}, with CSFS \[(\cP(X), \cE=\{surjections\}, \cM=\{inclusions\}).\] 
\begin{itemize}
\item $\cM$ is a thin category, but $\cE$ is not thin unless $\sharp(X)\leq 1$ ($X$ is empty or a singleton). 
\item The object $X$ is strictly terminal in $\cM$, but not strictly initial in $\cE$ unless $X=\emptyset$.
\item The SFS is always complete. Indeed, let first be $e:A\CE B\in \cE_1$ and $m:A\CM C\in \cM_1$ ($e$ is a surjection from $A$ onto $B$ and $A$ is a subset of $C$). If $A$ is the empty set, then $B$ is also the empty set, and we may take $D=C$ and $C\CE D$ the identity function. Otherwise, we let $D=B$, and define a surjection $e':C\CE B$ as follows: for any $a\in A\subseteq C$, $e'(a)=e(a)$, and for $c\in C\backslash A$, $e(c)=e(a_0)$, where $a_0$ is any chosen element of $A$. Then the following diagram commute
\[\xymatrix{
& \;C\; \ar@{->>}^{e'}[dr]& \\
\;A\; \ar@{>->}[ur]^m \ar@{->>}[dr]_{e}& & \;B\;\\
& \;B\; \ar@{>->}[ur]_{m'=id_B} & }\]
Second, let $e:C\CE D\in \cE_1$ and $m:B\CM D\in \cM_1$. Pose $A=e^{-1}(B)$ and define $e'$ as the restriction of $e$ from $A$ onto $B$. It holds that $A\subseteq C$, so that 
\vspace{-1cm}
\[\xymatrix{
& \;C\; \ar@{->>}[dr]^e& \\
\;A\; \ar@{>->}[ur] \ar@{->>}[dr]_{e'}& & \;D\;\\
& \;B\; \ar@{>->}[ur]_m & }
\]
\end{itemize}
\end{example}

\begin{example}
We consider the CSFS of Example \ref{EX2}: $\cC_0=\NN$, $\cC(a,b)=\{x\in \NN |x \leq a \text{ and } x\leq b\}$ and composition is defined by \[\xymatrix{
a\ar@/^/[r]^x& b \ar@/^/[r]^y& c}=\xymatrix{
a\ar@/^/[rr]^{x\wedge y}& & c}\]
The category $\cE$ satisfies that $\cE(a,b)=\{b\}$ if $b\leq a$ and $\cE(a,b)=\emptyset$ otherwise, and $\cM$ satisfies that $\cM(a,b)=\{a\}$ if $a\leq b$ and $\cM(a,b)=\emptyset$ otherwise. 
\begin{itemize}
\item As $\NN$ has no top element, we have that the SFS is not unital. 
\item The SFS is thin by definition of $\cE$ and $\cM$. 
\item It is also complete as proved by the following commutative diagrams:  
\[b\leq a\leq c\qquad
\xymatrix{
& \;c\; \ar@{.>>}[dr]^b & \\
\;a\; \ar@/^{2.5cm}/@{.>}[rr]^{a\wedge b=b} \ar@/_{2.5cm}/@{.>}[rr]_{b\wedge b=b} \ar@{>->}[ur]^a \ar@{->>}[dr]_b& & \;b\;\\
& \;b\; \ar@{>.>}[ur]_b & }\]

\[b\leq d\leq c\qquad
\xymatrix{
& \;c\; \ar@{->>}[dr]^d& \\
\;b\;\ar@/^{2.5cm}/@{.>}[rr]^{b\wedge d=b} \ar@/_{2.5cm}/@{.>}[rr]_{b\wedge b=b} \ar@{>.>}[ur]^b \ar@{.>>}[dr]_b& & \;d\;\\
& \;b\; \ar@{>->}[ur]_b & }
\]
\end{itemize}
\end{example}

\begin{example}
We consider the CSFS of Example \ref{EX3}: $\cC_0=\NN$, $\cC(a,b)=\{x\in \NN |a \leq x \text{ and } b\leq x\}$ and composition is defined by \[\xymatrix{
a\ar@/^/[r]^x& b \ar@/^/[r]^y& c}=\xymatrix{
a\ar@/^/[rr]^{x-b+ y}& & c}\]
The category $\cE$ satisfies that $\cE(a,b)=\{b\}$ if $a\leq b$ and $\cE(a,b)=\emptyset$ otherwise, and $\cM$ satisfies that $\cM(a,b)=\{a\}$ if $b\leq a$ and $\cM(a,b)=\emptyset$ otherwise. 
\begin{itemize}
\item As $\NN$ has a bottom element $0$, the SFS is unital at $\zeta=0$. 
\item The SFS is thin by definition of $\cE$ and $\cM$. 
\item It is also complete. To prove this, we use the characterization of completeness for unital SFS. Let $a\CE x\in \cE_1$ ($a\leq x$). We let $u=x-a$ (observe that $u=x-a\in \NN$ since $a\leq x$, and that $x-a\leq x$ since $a\geq 0$). Then \[\xymatrix{
& \;\zeta\; \ar@{->>}[dr]^{x-a}& \\
\;a\;\ar@/^{2.5cm}/@{.>}[rr]^{a+(x-a)=x} \ar@/_{2.5cm}/@{.>}[rr]_{x-x+x=x} \ar@{>->}[ur]^a \ar@{->>}[dr]_{x} & & \;x-a\;\\
& \;x\; \ar@{>.>}[ur]_{x} & }
\] The other case is dual.
\end{itemize}
\end{example}

\subsection{Small categories associated to semigroups and monoids}\label{Sec:1}

In the sequel, $S=(S_0,.)$ will denote a semigroup (with $S_0$ its underlying set of elements), and $M=(M_0,.,1)$ will denote a monoid. By contrast, $(M_0,.)$ will be a unital semigroup. 
In order to study semigroups, various authors have introduced small categories associated to a semigroup $S$ in a canonical way. We consider here the \emph{{S}ch{\"u}tzenberger category} $\DD(S)$ of $S$ first defined in Costa and Steinberg \cite{costa2015Schutzenberger}.
Objects of $\DD(S)$ are elements of $S_0$ and morphisms are triples of the form $f=(a,x,b)$ with $x\in aS^1\cap S^1b$ (where $S^1$ denote the unital semigroup generated by $S$). The domain of $f$ is $a$, its codomain is $b$ and we use the notation $f=a \overset{x}\longrightarrow b$. If $x=av=ub$ for some $u,v\in S^1$ and $g=b \overset{y}\longrightarrow c$ is a morphism with $y=bw=rc$, then the concatenation is $fg=a \overset{x}\longrightarrow b \overset{y}\longrightarrow c=a \overset{uy=xw}\longrightarrow c$\footnote{Our definition is actually opposite to that of \cite{costa2015Schutzenberger}}. In terms of \emph{Green's preorders} (see \cite{green1951structure, howie1976introduction}, but also \cite{mary2021b} for the link with the {S}ch{\"u}tzenberger category): $a \overset{x}\longrightarrow b$ is a morphism in the category $\DD(S)$ if and only if $x\leq_{\cL} b$ and $x\leq_{\cR} a$.

For a monoid, this construction actually amounts to a certain completion of categories. Let $M=(M_0,.,1)$ be a monoid and $\bB M$ be its `delooping' (one-object category with elements of $M$ as morphisms, and product as composition). Let also $(\bB M) ^{\to}$ be its category of arrows, and define a relation $\rho$ on $(\bB M) ^{\to}_1$ by $(u,v)\rho (u',v')$, with $(u,v):a\to b$ and $(u',v'):a\to b$, if $av=av'$ and $ub=u'b$. Then the category $\DD(M)= (\bB M) ^{\to}/\rho$ is the \emph{Freyd completion}, or \emph{epi-monic completion}, of the category $\bB M$ \cite{grandis2000weak}. It is called the \emph{pre-regular completion} in \cite{hu1996note}. 

\textsc{Arrows in $\DD(M)$: } 
\begin{figure}[!ht]
\begin{center}
\vspace{-1cm}
\[\xymatrix{
a\ar[d]_{x} & &a\ar[dd]^{xw=uy}\\
b \ar[d]_{y} &= &  \\
c& &\;c}\qquad \qquad
\xymatrix{
\bullet \ar[d]_{u} \ar[dr]^{x}  \ar[r]^{a} &   \bullet \ar[d]^{v}\\
\bullet \ar[d]_{r} \ar[dr]^{y}  \ar[r]^{b} &   \bullet \ar[d]^{w}\\
\bullet   \ar[r]_{c} &   \bullet}
\]
\end{center}
\caption{$\DD(M)$ and composition}\label{FigFrS}
\end{figure}

It turns out that the category $\DD(M)$ has a strict factorization system, that enjoys many properties:

\textsc{Strict factorization of $f:a\overset{x}{\rightarrow} b$ in $\DD(M)$:}  
\begin{figure}[!ht]
\begin{center}
\vspace{-1cm}
\[\xymatrix{
a\ar[dd]_{x}\\
\;\\
b}\xymatrix{
\;\\
=\\
\; }\xymatrix{
a\ar@{->>}[d]^{x}\\
x\ar@{>->}[d]^{x}\\
b }\qquad \qquad
\xymatrix{
\bullet \ar[d]_{1} \ar[dr]^{x}  \ar[r]^{a} &   \bullet \ar[d]^{v}\\
\bullet \ar[d]_{u} \ar[dr]^{x}  \ar[r]^{x} &   \bullet \ar[d]^{1}\\
\bullet   \ar[r]_{b} &   \bullet}
\]
\end{center}
\caption{Strict factorization in $\DD(M)$}\label{FigFrSSFS}
\end{figure}

Maps of $\cE$ are of the form $a\overset{x}{\twoheadrightarrow} x$ (with $x=av$ for some $v\in M$) and maps of $\cM$ are of the form $x\overset{x}{\rightarrowtail} b$ (with $x=ub$ for some $u\in M$).
By definition of the canonical maps, the categories $\cE$ and $\cM$ are thin. The SFS is also unital at $\zeta=1$ ($1\overset{x}{\twoheadrightarrow} x$ exists and is unique for all $x\in M_0$, and dually), and complete. Completeness actually amounts to the well known commutation property of $\leqR$ and $\leqL$.

\begin{example}
Consider the monoid $(\NN, +, 0)$. Then the category $\DD(S)$ and its SFS are those defined in Example \ref{EX3}. 
\end{example}

\subsection{Monoids from unital SFS}

Let $A=(\cC,\cE,\cM,\zeta)$ be a category with a unital, strict factorization system. Let $X=\cC_0$ and define a binary operation $\ast$ on $X$ as follows. For any $(a,b)\in X^2$, $a\ast b$ is the central object in the (unique) factorization of the (unique) map $a\CM \zeta\CE b$.
 
\begin{proposition}
The binary operation $\ast$ is associative, and $(X,\ast,\zeta)$ is a monoid. Triple products $a\ast b\ast c$ are the central objects in the factorization of $a\CM \zeta\CE b\CM \zeta \CE c$.
\end{proposition}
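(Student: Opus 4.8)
The plan is to exploit the canonical unit maps. For each object $a\in X=\cC_0$ write $\epsilon_a\colon\zeta\CE a$ and $\mu_a\colon a\CM\zeta$ for the strictly unique maps supplied by unitality at $\zeta$ ($\epsilon_a$ the unique $\cE$-map $\zeta\to a$, $\mu_a$ the unique $\cM$-map $a\to\zeta$). Then for $a,b\in X$ the composite $\mu_a\epsilon_b\colon a\to b$ is defined and, by the SFS axiom, has a unique decomposition $\mu_a\epsilon_b=e_{a,b}\,m_{a,b}$ with $e_{a,b}\colon a\CE a\ast b$ in $\cE_1$ and $m_{a,b}\colon a\ast b\CM b$ in $\cM_1$, so that $a\ast b$ is by definition the central object. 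The first facts I would record, straight from strict uniqueness, are $\mu_\zeta=\mathrm{id}_\zeta=\epsilon_\zeta$ (the identity at $\zeta$ lies in both $\cE_1$ and $\cM_1$), and, more usefully, $\mu_{a\ast b}=m_{a,b}\,\mu_b$ and $\epsilon_{a\ast b}=\epsilon_a\,e_{a,b}$: since $\cE$ and $\cM$ are subcategories, $m_{a,b}\mu_b$ is an $\cM$-map $a\ast b\to\zeta$ and $\epsilon_a e_{a,b}$ is an $\cE$-map $\zeta\to a\ast b$, hence each coincides with the unique such map.

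The unit laws are then immediate: $\mu_\zeta\epsilon_a=\epsilon_a=\epsilon_a\,\mathrm{id}_a$ is already a decomposition into an $\cE$-map followed by an $\cM$-map, so by uniqueness $\zeta\ast a=a$; dually $a\ast\zeta=a$.

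For associativity, and the triple-product claim simultaneously, I would read off $(a\ast b)\ast c$ and $a\ast(b\ast c)$ from the single morphism $\mu_a\epsilon_b\mu_b\epsilon_c\colon a\to c$, which is precisely the composite $a\CM\zeta\CE b\CM\zeta\CE c$. Substituting the two factorizations gives $\mu_a\epsilon_b\mu_b\epsilon_c=e_{a,b}\,m_{a,b}\,e_{b,c}\,m_{b,c}$. The crucial step is to take the unique $(\cE,\cM)$-decomposition of the inner composite $m_{a,b}\,e_{b,c}$ (an $\cM$-map followed by an $\cE$-map), say $m_{a,b}\,e_{b,c}=e'\,m'$ with $e'\colon a\ast b\CE w$ in $\cE_1$ and $m'\colon w\CM b\ast c$ in $\cM_1$. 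Then, using $\mu_{a\ast b}=m_{a,b}\mu_b$, we get $\mu_{a\ast b}\,\epsilon_c=m_{a,b}\,\mu_b\,\epsilon_c=m_{a,b}\,e_{b,c}\,m_{b,c}=e'\,(m'm_{b,c})$; as $e'\in\cE_1$ and $m'm_{b,c}\in\cM_1$, this is \emph{the} decomposition of $\mu_{a\ast b}\epsilon_c$, whence $(a\ast b)\ast c=w$. Symmetrically, using $\epsilon_{b\ast c}=\epsilon_b e_{b,c}$, $\mu_a\,\epsilon_{b\ast c}=\mu_a\,\epsilon_b\,e_{b,c}=e_{a,b}\,m_{a,b}\,e_{b,c}=(e_{a,b}e')\,m'$ is the decomposition of $\mu_a\epsilon_{b\ast c}$, whence $a\ast(b\ast c)=w$. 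Finally $\mu_a\epsilon_b\mu_b\epsilon_c=(e_{a,b}e')(m'm_{b,c})$ is an $(\cE,\cM)$-decomposition whose central object is $w$, which is exactly the asserted description of the triple product.

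I do not anticipate a genuine obstacle: the whole argument uses only uniqueness of the strict factorization, closure of $\cE_1$ and $\cM_1$ under composition, and strict uniqueness of the unit maps — neither completeness nor thinness of the SFS is needed. The only point that requires mild care is keeping the direction of composition straight (everything is written left to right) and checking that prepending an $\cE$-map, or appending an $\cM$-map, to an $(\cE,\cM)$-decomposition again yields an $(\cE,\cM)$-decomposition, since this is what forces $(a\ast b)\ast c$, $a\ast(b\ast c)$ and the triple composite all to have the same central object $w$.
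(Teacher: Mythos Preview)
Your proof is correct and follows essentially the same strategy as the paper: factor the long composite $a\CM\zeta\CE b\CM\zeta\CE c$ and observe that both bracketings pick out the same central object. The paper compresses this into a single commutative diagram, whereas you make explicit the auxiliary identities $\mu_{a\ast b}=m_{a,b}\mu_b$ and $\epsilon_{a\ast b}=\epsilon_a e_{a,b}$ (deduced from strict uniqueness of the unit maps) that the diagram is silently using; your observation that neither thinness nor completeness is invoked is also accurate.
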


\begin{proof}
Let $a,b,c\in X$. Then 
\[\xymatrix{
 \;a\;\ar@{->>}[dr] \ar@{>->}[r]& \;\zeta\;\ar@{->>}[r] &\;b\;\ar@{->>}[dr] \ar@{>->}[r]& \;\zeta\; \ar@{->>}[r]&\;c\;\\
& \;a\ast b\;\ar@{>->}[ur] \ar@{->>}[dr] &&\;b\ast c\;\ar@{>->}[ur]&\\
&  & \;\alpha\;\ar@{>->}[ur] &&}
\]
where the central element in the middle factorization is $\alpha=(a\ast b)\ast c=a\ast (b\ast c)$.\\
As $\zeta\CM\zeta\CE b=\zeta\CE b\CM b$ then $\zeta$ is a left identity. Dually it is a right identity and $(X,\ast,\zeta)$ is a monoid.
\end{proof}

The monoid $(X,\ast,\zeta)$ is actually isomorphic to $\cC_1(\zeta,\zeta)$ (with composition).
\begin{corollary}
\[(\cC_0,\ast,\zeta)\simeq \left(\cC_1(\zeta,\zeta),.\right).\]
\end{corollary}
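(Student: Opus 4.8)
The plan is to exhibit an explicit bijection $\Phi\colon \cC_0 \to \cC_1(\zeta,\zeta)$ and check it transports $\ast$ to composition and $\zeta$ to the identity arrow $1_\zeta$. Given $a\in\cC_0$, unitality gives the strictly unique maps $m_a\colon a \CM \zeta$ in $\cM$ and $e_a\colon \zeta \CE a$ in $\cE$; I would set $\Phi(a) = e_a m_a \in \cC_1(\zeta,\zeta)$, i.e. the composite $\zeta \CE a \CM \zeta$. The inverse is obtained from the strict factorization system: any loop $f\in\cC_1(\zeta,\zeta)$ factors uniquely as $f = em$ with $e\in\cE_1$, $m\in\cM_1$; the central object of this factorization, call it $\Psi(f)$, is the candidate preimage.

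First I would verify $\Psi\circ\Phi = \mathrm{id}$: for $a\in\cC_0$, the factorization of $\Phi(a)=e_a m_a$ through the object $a$ is a factorization into an $\cE$-map followed by an $\cM$-map, so by uniqueness of strict factorizations its central object is $a$, giving $\Psi(\Phi(a))=a$. Conversely, for $\Phi\circ\Psi=\mathrm{id}$: given $f\colon\zeta\to\zeta$ with unique factorization $f = e m$ through $x=\Psi(f)$, I must show $e = e_x$ and $m = m_x$. But $e\colon\zeta\CE x$ is an $\cE$-map out of $\zeta$, and by unitality (strict initiality of $\zeta$ in $\cE$) there is a \emph{strictly unique} such map, namely $e_x$; dually $m = m_x$. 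Hence $f = e_x m_x = \Phi(x) = \Phi(\Psi(f))$. This establishes the bijection, and it visibly sends $\zeta$ to $e_\zeta m_\zeta = 1_\zeta 1_\zeta = 1_\zeta$.

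It remains to check $\Phi(a\ast b) = \Phi(a)\,\Phi(b)$, i.e. that $\Phi$ is a monoid homomorphism. By definition $a\ast b$ is the central object of the factorization of the composite $a \CM \zeta \CE b$, which is exactly $m_a e_b$; write this factorization as $m_a e_b = e'm'$ with $e'\colon a \CE a\ast b$, $m'\colon a\ast b \CM b$. I would then compute $\Phi(a)\Phi(b) = e_a m_a e_b m_b = e_a (e' m') m_b = (e_a e')(m' m_b)$. Now $e_a e'\colon \zeta \CE a \CE a\ast b$ lies in $\cE_1$ (wide subcategory closed under composition) and $m' m_b\colon a\ast b \CM b \CM \zeta$ lies in $\cM_1$; moreover $e_a e'$ is an $\cE$-map out of $\zeta$, so by strict uniqueness it equals $e_{a\ast b}$, and dually $m' m_b = m_{a\ast b}$. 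Therefore $\Phi(a)\Phi(b) = e_{a\ast b} m_{a\ast b} = \Phi(a\ast b)$, as required; combined with the bijection and the unit computation this gives the asserted isomorphism $(\cC_0,\ast,\zeta)\simeq(\cC_1(\zeta,\zeta),\cdot)$.

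The only genuinely delicate point is being careful about which ``uniqueness'' is in play at each step: the uniqueness of the central object in a strict factorization (used for $\Psi\Phi=\mathrm{id}$ and for identifying $a\ast b$) versus the strict uniqueness of the connecting $\cE$- and $\cM$-maps to and from $\zeta$ coming from unitality (used for $\Phi\Psi=\mathrm{id}$ and for the homomorphism property). Once one keeps these straight, every step is a short diagram chase, and no completeness or thinness hypothesis is needed — only that the SFS is unital at $\zeta$.
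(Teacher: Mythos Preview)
Your proof is correct and uses the same map as the paper: $\Phi(a)=\zeta\CE a\CM\zeta$. The paper simply asserts bijectivity and then checks multiplicativity via a diagram, whereas you spell out the inverse $\Psi$ explicitly and verify both composites; this is more detailed but not a different strategy.

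There is one small but genuine difference worth noting. In the multiplicativity step, the paper justifies the equality of the two $\cE$-maps $\zeta\CE x\ast y$ (the direct one and the composite through $x$) by invoking thinness of $\cE$. You instead invoke strict initiality of $\zeta$ in $\cE$ (and dually strict terminality in $\cM$), which is exactly the unitality hypothesis and does not require $\cE$ to be thin. Since the corollary is stated only under the assumption that the SFS is unital, your justification is the appropriate one; the paper's appeal to thinness is stronger than necessary here. Your closing remark that neither completeness nor thinness is needed is therefore on point.
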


\begin{proof}
We define an isomorphism as follows. Consider the function that sends $x\in \cC_0$ to $\zeta\CM x\CE \zeta$. It is bijective, and sends $\zeta$ to $id_{\zeta}$. Thus, we only have to prove that it is preserves the product (in which case its reciprocal will also preserve the product).
Let $x,y\in X$. Then 
\[\xymatrix{
\;\zeta\;\ar@{->>}[r] \ar@/_{.7cm}/@{->>}[drr] & \;x\;\ar@{->>}[dr] \ar@{>->}[r]& \;\zeta\;\ar@{->>}[r] &\;y\; \ar@{>->}[r]& \;\zeta\; \\
& & \;x\ast y\;\ar@{>->}[ur] \ar@/_{.7cm}/@{>->}[urr] &&}
\]
(since the SFS is thin). This ends the proof.
\end{proof}

We will use the notation $(X,\ast,\zeta)=\Sigma(A)$. For any monoid $M=(M_0,.,1)$, the following equality holds:
\[\Sigma\circ \DD(M)=M.\] Indeed, we have that \[a\overset{a=a.1}{\CM} 1 \overset{b=1.b}{\CE} b=a\overset{a.b}{\longrightarrow} b=a\overset{a.b}{\CE} a.b\overset{a.b}{\CM} b,\] so that $a\ast b=a.b$, for any $a,b\in M_0$.

\section{Some equivalences}\label{Sec:4}

\subsection{Categories and Functors}
We consider the following categories:
\benn
\item $\mathbf{Mon}$ is the large category of monoids and \textbf{monoid homomorphisms};
\item $\mathbf{Mon_s}$ is the large category of unital semigroups (rather than monoids) and \textbf{semigroup homomorphisms};  
\item $\mathbf{uc-CSFS}$ is the large category of small categories with a strict factorization system, such that the factorization system is thin, complete and unital. Moreover we consider the categories \textbf{pointed at their units}. Its objects are thus quadruple $A=(\cC,\cE,\cM,\zeta)$, where $\cC$ is a small pointed category (at $\zeta$) with a SFS $(\cE,\cM)$ unital at $\zeta$. If $A$ and $A'$ are two objects of this category, then a morphism $H:A\to A'$ is a pointed functor $H:\cC\to \cC'$ that preserves the SFS ($H(\cE)\subseteq \cE'$ and $H(\cM)\subseteq \cM'$) (pointed means $H(\zeta)=\zeta'$);
\item By contrast $\mathbf{uc-CSFS}_s$ is the previous category but not pointed. The difference lies in the functors: if $A$ and $A'$ are two objects of this category, then a morphism $H:A\to A'$ is a functor $H:\cC\to \cC'$ that preserves the SFS ($H(\cE)\subseteq \cE'$ and $H(\cM)\subseteq \cM'$) and such that \[H(\zeta)\CM\zeta'\CE H(\zeta)=Id_{H(\zeta)}\] (equiv.\ $\zeta'\CE H(\zeta)\CM \zeta'$ is idempotent, equiv.\ $H(\zeta)\ast' H(\zeta)=H(\zeta)$). We call them semi-pointed functors. 
\een

The assignment $\DD:M\mapsto \DD(M)$ extends to a functor on these categories in the obvious way: $\mathbf{\DD}:\mathbf{Mon}\to \mathbf{uc-CTSFS}$ maps the monoid homomorphism $h:M\to M'$ to the functor $H=\mathbf{\DD}(h):\DD(M)\to \DD(M')$ with $H\left(a\overset{x}{\longrightarrow} b\right)=H(a)\overset{H(x)}{\CE} H(b)$. In particular if $x=au$ for some $u\in X$, then  $H\left(a\overset{x}{\CE} x\right)=H(a)\overset{H(x)}{\longrightarrow} H(x)$ with $H(x)=H(a)H(u)$.

Similarly, the assignment $\Sigma:A\mapsto \Sigma(A)$ extends to a functor $\mathbf{\Sigma}:\mathbf{uc-CSFS}\to \mathbf{Mon}$, that maps $H:A\to A'$ to its underlying function from $X=\cC_0$ to $X'=\cC^{'}_0$. As $H$ preserves the SFS and the units, it is a monoid homomorphism. Just apply $H$ to the commutative diagram \[\xymatrix{
 \;a\;\ar@{->>}[dr] \ar@{>->}[r]& \;\zeta\;\ar@{->>}[r] &\;b\;\\
& \;a\ast b\;\ar@{>->}[ur] &}
\] to obtain
\[\xymatrix{
 \;H(a)\;\ar@{->>}[dr] \ar@{>->}[r]& \;H(\zeta)=\zeta'\;\ar@{->>}[r] &\;H(b)\;\\
& \;H(a\ast b)=H(a)\ast'H(b)\;\ar@{>->}[ur] &}
\] 

As functors, we have the equality $\mathbf{\Sigma}\circ \mathbf{\DD}=\mathbf{Id_{Mon}}$. 

\subsection{Equivalences of categories}
We now prove that $\mathbf{Mon}$ is equivalent to $\mathbf{uc-CTSFS}$.  

\begin{theorem}\label{Th_Equiv}
$\mathbf{\DD} \dashv  \mathbf{\Sigma} \colon \mathbf{Mon}\to \mathbf{uc-CTSFS}$ is an adjoint equivalence.
\end{theorem}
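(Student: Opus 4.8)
The plan is to verify directly that the adjunction $\mathbf{\DD} \dashv \mathbf{\Sigma}$ holds with unit and counit that are natural isomorphisms, since we already have $\mathbf{\Sigma}\circ\mathbf{\DD}=\mathbf{Id_{Mon}}$ as functors; this makes the unit $\eta\colon \mathbf{Id_{Mon}}\Rightarrow \mathbf{\Sigma}\mathbf{\DD}$ literally the identity. So the real content is (i) to check that $\mathbf{\DD}$ actually lands in $\mathbf{uc\text{-}CTSFS}$ and that $\mathbf{\Sigma}$ is well-defined on that subcategory, (ii) to construct a natural isomorphism $\epsilon_A\colon \mathbf{\DD}\mathbf{\Sigma}(A)\xrightarrow{\ \sim\ } A$ for every object $A=(\cC,\cE,\cM,\zeta)$ of $\mathbf{uc\text{-}CTSFS}$, and (iii) to verify the triangle identities, which with $\eta=\mathrm{id}$ reduce to $\mathbf{\Sigma}(\epsilon_A)=\mathrm{id}$ and $\epsilon_{\mathbf{\DD}(M)}=\mathrm{id}$. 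Once $\epsilon$ is a natural isomorphism and the triangles hold, the adjunction is automatically an adjoint equivalence.

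First I would record that $\mathbf{\DD}(M)$ is a $uc\text{-}CTSFS$: the text already observes that the SFS on $\DD(M)$ is thin, unital at $1$, and complete (completeness being the commutation of $\leqR$ and $\leqL$), so $\mathbf{\DD}$ corestricts to $\mathbf{uc\text{-}CTSFS}$; and $\mathbf{\Sigma}$ restricted to $\mathbf{uc\text{-}CTSFS}$ is a functor to $\mathbf{Mon}$ by the earlier Proposition and the discussion of $\mathbf{\Sigma}$ on morphisms. Then for the counit: given $A=(\cC,\cE,\cM,\zeta)$, set $M=\mathbf{\Sigma}(A)=(\cC_0,\ast,\zeta)$ and define $\epsilon_A\colon\DD(M)\to\cC$ on objects by $a\mapsto a$ (both object sets are $\cC_0$), and on a morphism $a\xrightarrow{x} b$ of $\DD(M)$ — which by definition factors as $a\xrightarrow{x}_{\cE} x \xrightarrow{x}_{\cM} b$ in $\DD(M)$, with $x=a\ast v$, $x=u\ast b$ — send it to the composite $a\CE x\CM b$ in $\cC$, using the strictly unique $\cE$-map $a\CE x$ and $\cM$-map $x\CM b$, whose existence follows because $x=a\ast v \leq_{\cR}$-style data from $A$. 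The key point is that in $\DD(M)$ a morphism $a\to b$ is exactly a choice of object $x$ together with witnesses $x=a\ast v=u\ast b$, and the combinatorics of $\ast$ (central object of $a\CM\zeta\CE b$, triple products, thinness of $\cE,\cM$) translate these witnesses into unique $\cE$- and $\cM$-maps in $\cC$; I would prove $\epsilon_A$ is a functor by checking it respects the two factorization patterns in Figure \ref{FigFrS}/\ref{FigFrSSFS}, which amounts to associativity of $\ast$ together with uniqueness of factorizations. Bijectivity on objects is clear; for fullness and faithfulness one shows that, conversely, any $f\colon a\to b$ in $\cC$ has a unique $\cE$-$\cM$ factorization $a\CE c\CM b$, and that $c$ together with the induced data $c=a\ast(\text{something})=(\text{something})\ast b$ is the unique $\DD(M)$-morphism mapping to $f$ — here the completeness and unitality of $A$, via the two "completion" diagrams displayed in the excerpt, are what guarantee such witnesses exist and that they are unique. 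Naturality of $\epsilon$ in $A$ is a diagram chase: a semi-/pointed functor $H\colon A\to A'$ induces $\mathbf{\Sigma}(H)$, a monoid map, hence $\mathbf{\DD}\mathbf{\Sigma}(H)$, and the square commutes on objects trivially and on morphisms because $H$ preserves $\cE$, $\cM$ and unique factorizations.

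Finally, the triangle identities: $\mathbf{\Sigma}(\epsilon_A)\colon \mathbf{\Sigma}\mathbf{\DD}\mathbf{\Sigma}(A)\to\mathbf{\Sigma}(A)$ is, on underlying sets, the identity of $\cC_0$ because $\epsilon_A$ is the identity on objects and $\mathbf{\Sigma}$ of a functor is its object map; and $\epsilon_{\mathbf{\DD}(M)}\colon\mathbf{\DD}\mathbf{\Sigma}\mathbf{\DD}(M)=\mathbf{\DD}(M)\to\mathbf{\DD}(M)$ is the identity because $\mathbf{\Sigma}\mathbf{\DD}(M)=M$ on the nose (established in the excerpt via $a\ast b=a.b$) and under that identification the $\cE$-$\cM$ factorization used to define $\epsilon$ is exactly the canonical factorization $a\CE ab\CM b$ of Figure \ref{FigFrSSFS}. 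The main obstacle I expect is the well-definedness and functoriality of $\epsilon_A$ on morphisms: one must be careful that the morphism $a\xrightarrow{x}b$ in $\DD(\mathbf{\Sigma}(A))$ really does carry exactly the information of a "central object $x$ between $a$ and $b$" with $a\CE x$ and $x\CM b$ in $\cC$, and that composition in $\DD(\mathbf{\Sigma}(A))$ — defined via the $xw=uy$ rule — matches composition of these factored maps in $\cC$; getting that matching right uses associativity of $\ast$, thinness of $\cE$ and $\cM$ (so the intermediate maps are forced), and the uniqueness half of completeness. Everything else is bookkeeping.
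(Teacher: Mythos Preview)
Your proposal is correct and follows essentially the same approach as the paper: both take the unit to be the identity (from $\mathbf{\Sigma}\circ\mathbf{\DD}=\mathbf{Id_{Mon}}$), define the counit $\epsilon_A$ as the identity-on-objects functor sending $a\overset{x}{\to}b$ to the unique composite $a\CE x\CM b$ in $\cC$, justify existence of these maps via the definition of $\ast$ and completeness, and observe that the triangle identities are then automatic. The only cosmetic difference is that the paper proves $\epsilon_A$ is an isomorphism by writing down an explicit inverse functor (sending $a\CE x\CM b$ back to $a\overset{x}{\to}b$, with completeness supplying the needed witnesses $x=a\ast v=u\ast b$), whereas you argue via bijectivity on objects plus full-and-faithfulness; these are the same argument in different clothing.
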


\begin{proof}
The unit $\eta=\{\eta_M=id_M, M\in  \mathbf{Mon}\}$ is the identity from $\mathbf{Id_{Mon}}$ to $\mathbf{\Sigma} \circ \mathbf{\DD}=\mathbf{Id_{Mon}}$. Its components are all isomorphisms.\\
The counit $\varepsilon$ is defined for any $A=(\cC, \cE,\cM,\zeta)$ as follows: $\varepsilon_A:\mathbf{\DD\circ \Sigma}(A)\to A$ is the identity on object functor that sends any morphism $a\overset{x}{\CE} x\overset{x}{\CM} b$ to $a\CE x\CM b$ in $\cC_1$. As $\cE$ and $\cM$ are thin, these two maps $a\CE x$ and $x\CM b$ are unique if they exist. That these maps exist comes from the definition of the product $\ast$ and the construction of $\DD(M)$. As $a\overset{x}{\CE} x$ then $x=a\ast v$ for some $v\in X=\cC_0$, hence $a\CM \zeta\CE v=a\CE a\ast v\CM v=a\CE x\CM v$. By dual arguments, $x\CM b$ exists. One checks directly that it is a natural transformation, and that the triangle identities are trivially satisfied.\\
We finally prove that $\varepsilon_A$ is an isomorphism by constructing its inverse. It is the morphism $\varepsilon_A^{-1}:A\to \mathbf{\DD\circ \Sigma}(A)$ defined by the identity on object functor that sends $a\CE x\CM b$ to $a\overset{x}{\CE} x\overset{x}{\CM} b$. That $a\overset{x}{\CE} x$ exists follows precisely from the definition of the completeness: as $a\CE x$ then $a\CM\zeta \CE v=a\CE x\CM v$ for some $v\in \cC_0$, and $x=a\ast v$. And dually for $x\overset{x}{\CM} b$.  
\end{proof}

Similarly we prove that $\mathbf{Mon_s}$ is equivalent to $\mathbf{uc-CTSFS_s}$.  
First, we observe that $\mathbf{\DD}:\mathbf{Mon_s}\to \mathbf{uc-CTSFS_s}$, that maps the semigroup homomorphism $h:M\to M'$ to the functor $H=\mathbf{\DD}(h):\DD(M)\to \DD(M')$ is still well defined.

Second, we can define a functor $\mathbf{\Sigma}:\mathbf{uc-CSFS_s}\to \mathbf{Mon_s}$, that maps the semi-pointed functor $H:A\to A'$ to its underlying function from $X=\cC_0$ to $X'=\cC'_0$. If we apply $H$ to the commutative diagram \[\xymatrix{
 \;a\;\ar@{->>}[dr] \ar@{>->}[r]& \;\zeta\;\ar@{->>}[r] &\;b\;\\
& \;a\ast b\;\ar@{>->}[ur] &}
\] we obtain
\[\xymatrix{
 \;H(a)\;\ar@{->>}[dr] \ar@{>->}[r]& \;H(\zeta)\;\ar@{->>}[r] &\;H(b)\;\\
& \;H(a\ast b)\;\ar@{>->}[ur] &}
\] 
But as $H(\zeta)\CM\zeta'\CE H(\zeta)=Id_{H(\zeta)}$, this can be rewritten in the form 
\[\xymatrix{
 \;H(a)\;\ar@{->>}[dr] \ar@{>->}[r]& \;\zeta'\;\ar@{->>}[r] &\;H(b)\;\\
& \;H(a\ast b)\;\ar@{>->}[ur] &}
\] 
so that $h(a\ast b)=h(a)\ast'h(b)$ ($h$ is a semigroup homomorphism).
As functors, we have the equality $\mathbf{\Sigma}\circ \mathbf{\DD}=\mathbf{Id_{Mon_s}}$.

\begin{theorem}\label{Th_Equivs}
$\mathbf{\DD} \dashv  \mathbf{\Sigma} \colon \mathbf{Mon_s}\to \mathbf{uc-CTSFS_s}$ is an adjoint equivalence.
\end{theorem}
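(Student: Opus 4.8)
The plan is to mirror the proof of Theorem \ref{Th_Equiv}, checking that every step survives the weakening from pointed functors to semi-pointed functors. As in the monoid case, the unit $\eta$ of the adjunction is the identity natural transformation from $\mathbf{Id_{Mon_s}}$ to $\mathbf{\Sigma}\circ\mathbf{\DD}=\mathbf{Id_{Mon_s}}$, so its components are isomorphisms and there is nothing to prove there. The real work is again in the counit $\varepsilon$ and in verifying that it is a natural isomorphism.

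First I would define, for each $A=(\cC,\cE,\cM,\zeta)$, the component $\varepsilon_A\colon\mathbf{\DD\circ\Sigma}(A)\to A$ exactly as before: it is the identity-on-objects functor sending $a\overset{x}{\CE}x\overset{x}{\CM}b$ to the composite $a\CE x\CM b$ in $\cC_1$, using thinness of $\cE$ and $\cM$ to guarantee uniqueness, and using the definition of $\ast$ together with completeness (the characterization recalled before the definition of $uc$-CTSFS) to guarantee existence, precisely as in the proof of Theorem \ref{Th_Equiv}. The inverse $\varepsilon_A^{-1}$ sending $a\CE x\CM b$ to $a\overset{x}{\CE}x\overset{x}{\CM}b$ is constructed the same way from completeness. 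Since $\mathbf{\DD\circ\Sigma}(A)=\DD((\cC_0,\ast,\zeta))$ does not see the basepoint at all (both $\DD$ and $\Sigma$ on this composite only manipulate the object set and the SFS), the formulas are literally unchanged; what must be rechecked is only that $\varepsilon_A$ and $\varepsilon_A^{-1}$ are now morphisms in $\mathbf{uc\text{-}CTSFS_s}$, i.e.\ semi-pointed rather than pointed, and that $\varepsilon$ is natural with respect to semi-pointed functors.

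The key point — and what I expect to be the only genuine obstacle — is that $\varepsilon_A$ need no longer be pointed: it sends the unit $\zeta$ of $\DD(\Sigma(A))$ (which is $\zeta$ itself as an object) to $\zeta$, so in fact it \emph{is} pointed, and this direction is fine; the subtlety is instead in naturality. Given a semi-pointed functor $H\colon A\to A'$, one must check the square relating $\varepsilon_{A'}\circ\mathbf{\DD\Sigma}(H)$ and $H\circ\varepsilon_A$ commutes. On objects both are just $H$ on object sets, and on morphisms one chases $a\overset{x}{\CE}x\overset{x}{\CM}b$ around both ways, using that $H$ preserves $\cE$, preserves $\cM$, and that $\cE',\cM'$ are thin so that the two resulting factorizations in $\cC'$ agree. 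The condition $H(\zeta)\ast'H(\zeta)=H(\zeta)$ does not actually intervene in this chase — it was only needed to make $\mathbf{\Sigma}(H)$ a semigroup homomorphism in the first place, which we are granted from the paragraph preceding the statement — so naturality goes through verbatim. The triangle identities $\mathbf{\Sigma}\varepsilon\circ\eta\mathbf{\Sigma}=\mathrm{id}$ and $\varepsilon\mathbf{\DD}\circ\mathbf{\DD}\eta=\mathrm{id}$ are trivial since $\eta$ is the identity and $\mathbf{\Sigma}\varepsilon_A$, $\varepsilon_{\mathbf{\DD}(M)}$ are identities on the relevant underlying data. Hence $\mathbf{\DD}\dashv\mathbf{\Sigma}$ with both unit and counit invertible, which is an adjoint equivalence.

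I would close by remarking that the one place to be careful is the implicit claim that $\mathbf{\DD}$ indeed lands in $\mathbf{uc\text{-}CTSFS_s}$ and $\mathbf{\Sigma}$ in $\mathbf{Mon_s}$ on morphisms: for $\mathbf{\DD}(h)$ with $h$ a mere semigroup homomorphism this is the observation already made before the statement (the same formula $H(a\overset{x}{\longrightarrow}b)=H(a)\overset{H(x)}{\longrightarrow}H(b)$ still defines a functor preserving $\cE,\cM$, and it is semi-pointed because $h(1)\cdot h(1)=h(1)$ forces $H(\zeta)\ast'H(\zeta)=H(\zeta)$); and for $\mathbf{\Sigma}(H)$ it is the diagram chase displayed just above the statement. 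With functoriality of $\mathbf{\DD}$ and $\mathbf{\Sigma}$ and the equality $\mathbf{\Sigma}\circ\mathbf{\DD}=\mathbf{Id_{Mon_s}}$ in hand, the proof is then a word-for-word transcription of the proof of Theorem \ref{Th_Equiv} with ``pointed'' replaced by ``semi-pointed'' throughout.
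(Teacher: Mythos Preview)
Your proposal is correct and follows exactly the approach the paper takes: its proof of Theorem \ref{Th_Equivs} is simply ``The proof is similar to that of Theorem \ref{Th_Equiv}.'' Your write-up is a faithful (and more detailed) expansion of that sentence, with the right observations about where semi-pointedness does and does not intervene.
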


\begin{proof}
The proof is similar to that of Theorem \ref{Th_Equiv}.
\end{proof}

\section{Natural transformations, 2-categories and Morita equivalence}\label{Sec:Nat}

\subsection{Natural transformations and conjugations}
Consider two morphisms $F,G:A\to A'$ in the category $\mathbf{uc-CTSFS}$ ( $A=(\cC,\cE,\cM,\zeta)$ and $A'=(\cC',\cE',\cM',\zeta')$). As pointed functors, we can consider pointed natural transformations between them.   
A pointed natural transformation $\alpha: F\Rightarrow G$ between pointed functors must satisfy:
\begin{center}
\vspace{-1cm}
\[\xymatrix{
\;\zeta\; \ar@{->>}[d]  && \;\zeta'\;\ar@{->>}[r] \ar@{->>}[d]  & \;\zeta'\;\ar@{>->}[r] \ar@{->>}[d]& \;\zeta'\; \ar@{->>}[d]\\
\;x\;\ar@{>->}[d] &&\;F(x)\;\ar@{->>}[r]\ar@{>->}[d] & \;\alpha(x)\;\ar@{>->}[r] \ar@{>->}[d]& \;G(x)\; \ar@{>->}[d]\\
\;\zeta\; &&  \;\zeta'\;\ar@{->>}[r] &\zeta'\;\ar@{>->}[r]& \;\zeta'\;}
\] 
\end{center}
This implies that $F(x)=\alpha(x)=G(x)$ for all $x\in \cC_0$, and since $F$ and $G$ are entirely defined by their action on the objects, $F=G$, and $\alpha$ is the identity.

\begin{corollary}
The $2$-category of monoids, monoid homomorphisms and trivial $2$-cells is equivalent to the $2$-category of small categories with unital, thin and complete SFS, pointed functors that respect the SFS as morphisms, and pointed natural transformations as $2$-cells.
\end{corollary}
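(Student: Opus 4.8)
The plan is to observe that \emph{both} $2$-categories appearing in the statement are locally trivial — each hom-category has only identity $2$-cells — so that a $2$-equivalence between them amounts to no more than an equivalence of the underlying ordinary categories, and the latter is exactly Theorem \ref{Th_Equiv}.

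First I would dispose of the two local triviality claims. For $\mathbf{Mon}$ this is the definition: the $2$-category of monoids with only identity $2$-cells is the locally discrete $2$-category on the ordinary category $\mathbf{Mon}$. For the other side, the computation carried out just before the statement shows that a pointed natural transformation $\alpha\colon F\Rightarrow G$ between morphisms $F,G\colon A\to A'$ of $\mathbf{uc-CTSFS}$ is forced to satisfy $F(x)=\alpha(x)=G(x)$ for every object $x$: the upper square and strict initiality of $\zeta'$ in $\cE'$ give $F(x)=\alpha(x)$, the lower square and strict terminality of $\zeta'$ in $\cM'$ give $\alpha(x)=G(x)$, and since an SFS-preserving pointed functor is determined by its action on objects (each arrow factors uniquely as $e m$ with $e,m$ in the thin categories $\cE',\cM'$) one gets $F=G$ with $\alpha$ the identity. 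Hence the hom-categories of $\mathbf{uc-CTSFS}$, equipped with pointed natural transformations as $2$-cells, are discrete, and $\mathbf{uc-CTSFS}$ (so enriched) is the locally discrete $2$-category on the ordinary category $\mathbf{uc-CTSFS}$ of Section \ref{Sec:4}.

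Next I would promote $\mathbf{\DD}$ and $\mathbf{\Sigma}$ to (strict) $2$-functors and $\eta,\varepsilon$ to $2$-natural transformations. Because the source and target $2$-categories have only identity $2$-cells, there is a unique possible action on $2$-cells (identities to identities), the strict functoriality condition in the $2$-dimensional direction is vacuous, and $2$-naturality of $\eta$ and $\varepsilon$ reduces to ordinary naturality, which holds by Theorem \ref{Th_Equiv}. The triangle identities are equations between $2$-cells assembled from the $1$-categorical $\eta,\varepsilon$, and they hold for exactly the same reason as in Theorem \ref{Th_Equiv}. Consequently $\mathbf{\DD}\dashv\mathbf{\Sigma}$ is a strict $2$-adjoint equivalence between these two $2$-categories; in particular they are $2$-equivalent, which is the assertion of the corollary.

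The only genuinely substantive point is the verification that $\mathbf{uc-CTSFS}$ carries no nontrivial pointed $2$-cells, and that is precisely the diagram chase preceding the statement — everything else is the formal remark that a $2$-equivalence of locally discrete $2$-categories is an equivalence of their underlying $1$-categories, together with an invocation of Theorem \ref{Th_Equiv}. I therefore expect no real obstacle beyond stating this cleanly.
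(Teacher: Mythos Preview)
Your proposal is correct and follows essentially the same approach as the paper: the diagram chase preceding the corollary shows that the pointed $2$-cells in $\mathbf{uc-CTSFS}$ are all identities, so both $2$-categories are locally discrete and the $2$-equivalence reduces to the $1$-categorical equivalence of Theorem~\ref{Th_Equiv}. Your write-up is in fact more explicit than the paper about the formal passage from the $1$-equivalence to the $2$-equivalence, but the substance is identical.
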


As semi-pointed functors, however, the natural transformations will not degenerate. A natural transformation $\alpha: F\Rightarrow G$ amounts to a function $\alpha:\cC_0\to \cC_0^{'}$ such that the following diagram commutes:
\begin{center}
\vspace{-1cm}
\[\xymatrix{
\;a\; \ar@{->>}[d]  && \;F(a)\;\ar@{->>}[r] \ar@{->>}[d]  & \;\alpha(a)\;\ar@{>->}[r] \ar@{->>}[d]& \;G(a)\; \ar@{->>}[d]\\
\;x\;\ar@{>->}[d] &&\;F(x)\;\ar@{->>}[r]\ar@{>->}[d] & \;\alpha(x)\;\ar@{>->}[r] \ar@{>->}[d]& \;G(x)\; \ar@{>->}[d]\\
\;b\; &&  \;F(b)\;\ar@{->>}[r] &\alpha(b)\;\ar@{>->}[r]& \;G(b)\;}
\] 
\end{center}
In particular, this function $\alpha$ defines a (non-pointed) functor that respects the SFS.

We let $\alpha=\alpha(\zeta)\in \cC_0^{'}$. Let $a=b=\zeta$ in this diagram. We obtain
\begin{center}
\vspace{-1cm}
\[\xymatrix{
\;\zeta\; \ar@{->>}[d]  && \;F(\zeta)\;\ar@{->>}[r] \ar@{->>}[d]  & \;\alpha\;\ar@{>->}[r] \ar@{->>}[d]& \;G(\zeta)\; \ar@{->>}[d]\\
\;x\;\ar@{>->}[d] &&\;F(x)\;\ar@{->>}[r]\ar@{>->}[d] & \;\alpha(x)\;\ar@{>->}[r] \ar@{>->}[d]& \;G(x)\; \ar@{>->}[d]\\
\;\zeta\; &&  \;F(\zeta)\;\ar@{->>}[r] &\alpha\;\ar@{>->}[r]& \;G(\zeta)\;}
\] 
\end{center}

Since the functor are semi-pointed, this reads (with the notation $\ast$) \[(\forall x\in \cC_0) \;\alpha(x)=F(x)\ast' \alpha=\alpha \ast' G(x).\] 
This expresses $\alpha$ as an `intertwiner' between $F$ and $G$.
In particular (for $x=\zeta$) it holds that $\alpha=F(\zeta)\ast' \alpha=\alpha\ast' G(\zeta)$. 

Conversely, given an element $\alpha\in \cC_0^{'}$ such that $(\forall x\in \cC_0) \;F(x)\ast' \alpha=\alpha \ast' G(x)$, we can define $\alpha(x)=F(x)\ast'\alpha=\alpha\ast' G(x)$ for all $x\in \cC_0$. As the SFS of $A'$ is thin and complete, all the maps involved in the diagram defining the natural transformation exist and commute. We consider such elements as the natural 2-cells of a (strict) 2-category $\mathbf{uc-CTSFS}$, with  semi-pointed functors as 1-cells. 

Using the equivalence between $\mathbf{uc-CTSFS_s}$ and $\mathbf{Mon_s}$, we can thus define natural transformations between semigroup homomorphisms. These natural transformations play a special role in the study of Morita equivalence of monoids, and have been called by M. Rogers conjugations (but his definition is opposite to ours).

\begin{definition}[opposite to {\cite[Definition 6.2]{rogers2019toposes}}] Let $f, g : M \to M'$ be semigroup homomorphisms. A \emph{conjugation} $\alpha$ from $f$ to $g$, denoted $\alpha : f \Rightarrow g$ is an element $\alpha\in M'$ such that
$ f(1)\alpha = \alpha = \alpha g(1)$ and for every $m\in M$, $f(m) \alpha  = \alpha g(m)$. 
\end{definition}

By construction, the functors $\mathbf{\DD}$ and $\mathbf{\Sigma}$ extend to 2-functors on the respective 2-categories, where the image of the conjugation $\alpha:f\Rightarrow g$ is the natural transformation $\alpha:F\to G$ defined by $\alpha(.)=F(.)\alpha=\alpha G(.)$, and conversely to $\alpha:F\to G$ we associate the conjugation $\alpha=\alpha(1)$.

\begin{corollary}
The functors $\mathbf{\DD}$ and $\mathbf{\Sigma}$ define a strict 2-equivalence between the 2-categories $\mathbf{Mon_s}$, with conjugations as $2$-cells, and $\mathbf{uc-CTSFS_s}$, with natural transformations as $2$-cells. 
\end{corollary}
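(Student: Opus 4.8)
The plan is to package the two equivalences $\mathbf{\DD}\dashv\mathbf{\Sigma}\colon\mathbf{Mon_s}\to\mathbf{uc\text{-}CTSFS_s}$ already established in Theorem \ref{Th_Equivs} with the bijection between conjugations $\alpha\colon f\Rightarrow g$ and natural transformations $\alpha\colon F\Rightarrow G$ worked out just above the statement, and then check that this bijection is compatible with all the 2-categorical structure. Concretely, I would proceed as follows.

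\emph{Step 1: the 2-categories are well defined.} I would first verify that $\mathbf{Mon_s}$ with conjugations as 2-cells really is a (strict) 2-category: vertical composition of $\alpha\colon f\Rightarrow g$ and $\beta\colon g\Rightarrow h$ is the product $\beta\alpha$ (or $\alpha\beta$, with a fixed convention) in $M'$, the identity 2-cell on $f$ is $f(1)$, and horizontal composition with a homomorphism $k\colon M'\to M''$ on the left sends $\alpha$ to $k(\alpha)$, while whiskering on the right by $\ell\colon L\to M$ leaves $\alpha$ unchanged. One checks the intertwining identities $f(1)\alpha=\alpha=\alpha g(1)$ and $f(m)\alpha=\alpha g(m)$ are preserved under these operations and that the interchange law holds — all of this is a routine calculation in $M'$. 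The same must be done on the $\mathbf{uc\text{-}CTSFS_s}$ side; but since the natural transformations there are ordinary natural transformations of functors (restricted to semi-pointed ones, which the computation above shows is automatic), this is just the standard 2-category of categories, functors and natural transformations, cut down to a sub-2-category, so it suffices to check the sub-2-category is closed under the operations.

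\emph{Step 2: $\mathbf{\DD}$ and $\mathbf{\Sigma}$ are 2-functors.} Using the correspondence from the excerpt — to a conjugation $\alpha\colon f\Rightarrow g$ assign the natural transformation with components $\alpha(x)=F(x)\ast'\alpha=\alpha\ast'G(x)$, and conversely set $\alpha:=\alpha(\zeta)$ — I would check that $\mathbf{\DD}$ sends vertical/horizontal composites and identity 2-cells to the corresponding composites and identities, and likewise for $\mathbf{\Sigma}$. Again the translation between the product in $M'$ and composition in $\cC'$ is exactly the monoid isomorphism $(\cC_0,\ast,\zeta)\simeq(\cC_1(\zeta,\zeta),\cdot)$ from the Corollary, so these are bookkeeping verifications.

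\emph{Step 3: the 2-equivalence.} Since $\mathbf{\Sigma}\circ\mathbf{\DD}=\mathbf{Id_{Mon_s}}$ as (1-)functors and the unit $\eta$ is the identity, it remains only to upgrade the counit $\varepsilon$ of Theorem \ref{Th_Equivs} to an invertible 2-natural (or at least pseudonatural, but here genuinely 2-natural) transformation $\mathbf{\DD}\circ\mathbf{\Sigma}\Rightarrow\mathbf{Id}$, i.e.\ to check its naturality square commutes not just for 1-cells but strictly respects 2-cells. Because $\varepsilon_A$ is the identity-on-objects functor sending $a\overset{x}{\CE}x\overset{x}{\CM}b$ to $a\CE x\CM b$ and is already known to be an isomorphism of categories, and because the 2-cells on both sides are described by the same data (an element $\alpha\in\cC_0'$ with the intertwining property), the required 2-naturality is immediate. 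Then a strict 2-functor that is bijective on objects, 1-cells and 2-cells — or, more conservatively, one half of an adjoint equivalence whose unit and counit are invertible 2-natural transformations — is a 2-equivalence, which gives the Corollary.

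The main obstacle is essentially notational rather than mathematical: one must fix, once and for all, a direction convention for conjugations (Rogers' is opposite to the author's) and the corresponding order of vertical composition, and then make sure the isomorphism $a\ast b=a\cdot b$ and the opposite-to-\cite{costa2015Schutzenberger} convention for composition in $\DD(M)$ line up so that $\mathbf{\DD}$ and $\mathbf{\Sigma}$ are genuinely 2-functors and not merely 2-functors into the co/op-dual. Once the conventions are pinned down, every step reduces to the equalities $f(m)\alpha=\alpha g(m)$ and $\alpha(x)=F(x)\ast'\alpha$, so no new idea beyond Theorems \ref{Th_Equiv}--\ref{Th_Equivs} and the intertwiner computation is needed.
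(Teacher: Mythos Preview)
Your proposal is correct and follows the same line as the paper, which gives no separate proof for this Corollary: the paper simply records that ``by construction, the functors $\mathbf{\DD}$ and $\mathbf{\Sigma}$ extend to 2-functors'' via the bijection $\alpha\leftrightarrow\alpha(\zeta)$ and then states the Corollary as immediate from Theorem~\ref{Th_Equivs}. Your Steps~1--3 are precisely the routine verifications the paper leaves implicit; the one concrete convention to fix is that, with the paper's forward juxtaposition and $f(m)\alpha=\alpha g(m)$, vertical composition of $\alpha\colon f\Rightarrow g$ with $\beta\colon g\Rightarrow h$ is $\alpha\beta$ (not $\beta\alpha$).
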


\subsection{Morita equivalence of monoids}

By \cite[Corollary 6.6]{rogers2019toposes}, these 2-cells provides exactly the good notion of Morita equivalence between monoids (in the sense of \cite{banaschewski1972functors, knauer1971projectivity, talwar1995morita}). However, the triangle identities are not considered, and the proof uses topos theory. We will prove it directly below.

First, we recall the following well known characterization of Morita equivalence by enlargements, due independently to Knauer \cite[Theorem]{knauer1971projectivity} and Banachewski \cite[Proposition 4]{banaschewski1972functors}. Let $M,M'$ be monoids. Then $M$ and $M'$ are Morita equivalent if and only if there is an idempotent $e\in M$ such that $M = MeM$ and $M'\simeq eMe$.

To interpret this result in terms of adjoints in our $2$-category, we need the following lemma, that describes adjunctions in terms of conjugations.
\begin{lemma}
Let $f:M\to M';g$ be a pair of semigroup morphisms. Let also $\eta:Id_M\Rightarrow g\circ f$ and $\varepsilon f\circ g \Rightarrow Id_{M'}$ be two conjugations. 
The triangle identities are satisfied (equiv.\ $\DD(f)$ and $\DD(g)$ are adjoint functors, equiv.\ $f$ and $g$ are adjoint in the 2-category $\mathbf{Mon_s}$) if and only if: 
\benn
\item $f(1)=f(\eta)\varepsilon$; 
\item $g(1')=\eta g(\varepsilon)$.
\een
\end{lemma}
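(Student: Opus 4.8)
The plan is to work directly inside the $2$-category $\mathbf{Mon_s}$ and rewrite the two triangle identities as equations in $M$ and $M'$, using the identification of $2$-cells with conjugations. First I would record how the basic $2$-categorical operations act on conjugations; since the $2$-category structure on $\mathbf{Mon_s}$ is the one transported from $\mathbf{uc-CTSFS_s}$ along the strict $2$-equivalence $\DD$, these formulas are read off from that correspondence (equivalently, from a one-line computation of composites in $\DD(M')$). Denoting a conjugation $\alpha\colon p\Rightarrow q$ between $1$-cells $p,q\colon M\to M'$ simply by its underlying element $\alpha\in M'$: the identity $2$-cell $\mathrm{id}_h$ of $h\colon M\to M'$ is $h(1)$; the vertical composite of $\alpha\colon p\Rightarrow q$ followed by $\beta\colon q\Rightarrow r$ is the product $\alpha\beta\in M'$; the left whiskering of $\alpha$ by $h\colon M'\to M''$ is $h(\alpha)$; and the right whiskering of $\alpha$ by $k\colon K\to M$ is $p(k(1_K))\,\alpha\;(=\alpha\,q(k(1_K)))$. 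The only subtlety here is the last formula: because the $1$-cells are merely semi-pointed, right whiskering does not return $\alpha$ but inserts the idempotent $p(k(1_K))$.

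Next I would substitute these into the triangle identities for $f\dashv g$ with unit $\eta\in M$ and counit $\varepsilon\in M'$. For $(\varepsilon f)\circ(f\eta)=\mathrm{id}_f$: the left whiskering $f\eta$ is $f(\eta)$, the right whiskering $\varepsilon f$ is $(f\circ g\circ f)(1)\,\varepsilon=f(g(f(1)))\,\varepsilon$, so their vertical composite is $f(\eta)\,f(g(f(1)))\,\varepsilon=f(\eta\,g(f(1)))\,\varepsilon=f(\eta)\,\varepsilon$, where the last equality uses the conjugation law $\eta\,(g\circ f)(1)=\eta$; since $\mathrm{id}_f=f(1)$, the first triangle identity is exactly $(1)$. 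For $(g\varepsilon)\circ(\eta g)=\mathrm{id}_g$: the right whiskering $\eta g$ is $(Id_M\circ g)(1')\,\eta=g(1')\,\eta$, the left whiskering $g\varepsilon$ is $g(\varepsilon)$, so their vertical composite is $g(1')\,\eta\,g(\varepsilon)$; since $\mathrm{id}_g=g(1')$, the second triangle identity reads $g(1')\,\eta\,g(\varepsilon)=g(1')$.

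The one genuinely non-formal step is to recognise this last equation as $(2)$, i.e.\ to establish the (unconditional) identity $g(1')\,\eta\,g(\varepsilon)=\eta\,g(\varepsilon)$. For this I would start from the conjugation law $\varepsilon=(f\circ g)(1')\,\varepsilon=f(g(1'))\,\varepsilon$; applying the homomorphism $g$ gives $g(\varepsilon)=g(f(g(1')))\,g(\varepsilon)$, and then the intertwining law $m\,\eta=\eta\,g(f(m))$ of $\eta$, evaluated at $m=g(1')$, rewrites $\eta\,g(f(g(1')))$ as $g(1')\,\eta$; hence $\eta\,g(\varepsilon)=g(1')\,\eta\,g(\varepsilon)$. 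So the second triangle identity is precisely $\eta\,g(\varepsilon)=g(1')$, which is $(2)$. I expect this short computation to be the main obstacle; everything else is bookkeeping with the whiskering and composition formulas.

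Finally, the parenthetical equivalences are immediate from the results already established: $\DD\colon\mathbf{Mon_s}\to\mathbf{uc-CTSFS_s}$ is a strict $2$-equivalence sending $\eta,\varepsilon$ to $\DD(\eta),\DD(\varepsilon)$, and a $2$-equivalence preserves and reflects the validity of any equation between $2$-cells, in particular of the triangle identities; moreover all the $2$-cells occurring in those identities respect the SFS, so "$\DD(f)$ and $\DD(g)$ are adjoint functors with unit $\DD(\eta)$ and counit $\DD(\varepsilon)$", "$\DD(f)\dashv\DD(g)$ in $\mathbf{uc-CTSFS_s}$", and "$f\dashv g$ in $\mathbf{Mon_s}$ with unit $\eta$ and counit $\varepsilon$" all amount to the same two equations $(1)$ and $(2)$.
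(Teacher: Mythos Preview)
Your proof is correct and follows essentially the same strategy as the paper's: both translate the triangle identities into element equations in $M'$ and $M$ via the identification of $2$-cells with conjugation elements, the paper phrasing this through central objects of factorizations in $\DD(M')$ rather than through explicit whiskering formulas. The one place where you work harder than necessary is the second identity: had you used the other form $\eta g=\eta\,g(f(g(1')))$ of the right whiskering together with $f(g(1'))\varepsilon=\varepsilon$, the composite would collapse to $\eta\,g(\varepsilon)$ immediately, mirroring your treatment of the first identity (this is effectively the ``dually'' in the paper's argument).
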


\begin{proof}
The triangle identities are satisfied if and only if for all $m\in M$, $f(m)$ is the central element in the factorization of $f(\eta(m))\CM f(g(f(m))) \CE \varepsilon(f(m))$. Since $\eta:id_{M}\Rightarrow (g\circ f)$ is a conjugation, it satisfies $\eta(m)=m\eta=\eta g(f(m))$ for all $m\in M$, and dually $\varepsilon(f(m))=f(g(f(m)))\varepsilon=\varepsilon m$. Thus the central element in the factorization is $f(\eta) f(g(f(m)))\varepsilon=f(m)\f(\eta)\varepsilon$ and the first identity reduces to: for all $m\in M$, $f(m)=f(m)f(\eta)\varepsilon$. Dually, the second identity reads for all $m'\in M'$, $g(m')=\eta g(\varepsilon)g(m')$. But $f$ and $g$ are semigroup homomorphisms, so that this is equivalent to $f(1)=f(\eta)\varepsilon$ and $g(1')=\eta g(\varepsilon)$. 
\end{proof}

We also need to characterize invertible conjugations.
\begin{lemma}\label{LemIConj}
Let $f,g:M\to M'$ be two homomorphisms, and $\alpha:f\Rightarrow g$ be a conjugation.Then $\alpha$ is invertible if and only if there exists $\beta\in M'$ such that $\alpha\beta=f(1)$ and $\beta\alpha=g(1)$. In this case, the inverse of $\alpha:f\Rightarrow g$ is the conjugation $\gamma:g\Rightarrow f$ defined by the element $\gamma=\beta\alpha\beta$.
\end{lemma}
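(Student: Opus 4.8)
The plan is to recognize $2$-cells in the $2$-category $\mathbf{Mon_s}$ (conjugations) as morphisms in a suitably constructed category, in which ``invertible'' means ``isomorphism,'' and then read off the usual characterization of isomorphisms. Concretely, fix the two homomorphisms $f, g : M \to M'$. The vertical composite of conjugations is easy to describe: if $\alpha : f \Rightarrow g$ and $\alpha' : g \Rightarrow h$ are conjugations (elements of $M'$ with the intertwining and normalization conditions), then their composite $\alpha' \circ \alpha : f \Rightarrow h$ is the central object in the factorization of $f(1)\CM 1' \CE g(1) \CM 1' \CE h(1)$ transported through the equivalence of Section \ref{Sec:4}; unwinding this via $\Sigma \circ \DD = \mathbf{Id}$ and the fact that $\ast' $ is just the product of $M'$, one gets $\alpha' \circ \alpha = \alpha \alpha'$ (product in $M'$), and the identity $2$-cell on $f$ is the element $f(1) \in M'$. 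So I would first record this: vertical composition of conjugations from $f$ to $g$ is multiplication in $M'$ (written in the appropriate order), and $\mathrm{id}_f = f(1)$, $\mathrm{id}_g = g(1)$.

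Given this, the ``only if'' direction is immediate: if $\alpha : f \Rightarrow g$ is invertible with inverse $\beta : g \Rightarrow f$, then $\beta \circ \alpha = \mathrm{id}_f$ and $\alpha \circ \beta = \mathrm{id}_g$ read precisely $\alpha\beta = f(1)$ and $\beta\alpha = g(1)$ in $M'$ (up to fixing the order convention consistently with the displayed composition rule). For the ``if'' direction, suppose $\beta \in M'$ satisfies $\alpha\beta = f(1)$ and $\beta\alpha = g(1)$. I would first check that $\gamma := \beta\alpha\beta$ is a genuine conjugation from $g$ to $f$: the normalization $g(1)\gamma = \gamma = \gamma f(1)$ follows from $g(1)\beta\alpha\beta = (\beta\alpha)\beta\alpha\beta = \beta\alpha\beta = \beta\alpha\beta(\alpha\beta) = \beta\alpha\beta f(1)$; and the intertwining $g(m)\gamma = \gamma f(m)$ follows from the identity $g(m)\beta\alpha\beta$ and repeated use of $f(m)\alpha = \alpha g(m)$ together with $\alpha\beta = f(1)$, $\beta\alpha = g(1)$ — a short computation of the form $g(m)\beta\alpha\beta = \beta\alpha g(m)\alpha\beta$ doesn't quite work directly, so the real content is to slide $\alpha$ past $g(m)$ and $f(m)$, e.g. $g(m)\gamma = g(m)\beta\alpha\beta$; note $\alpha g(m)\beta = f(m)\alpha\beta = f(m)f(1)=f(m)$, hence $\beta\alpha g(m)\beta = \beta f(m)$... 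I would be careful here to get the bracketing right, but the point is that all the needed moves are among the three defining relations.

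Then I would verify $\gamma$ is the two-sided inverse of $\alpha$: $\gamma \circ \alpha$ and $\alpha \circ \gamma$ (whichever order the composition rule dictates) come out to $\beta\alpha\beta\cdot\alpha = \beta\alpha(\beta\alpha) = \beta\alpha g(1) = \beta\alpha = g(1) = \mathrm{id}_g$, and similarly $\alpha\cdot\beta\alpha\beta = f(1)\alpha\beta = \alpha\beta = f(1) = \mathrm{id}_f$. The main obstacle I anticipate is purely bookkeeping: pinning down the composition order for vertical composites of conjugations (the paper's convention writes the ``$\alpha\ast'$'' on one side and ``$\ast'\,\beta$'' on the other, so one must check whether $\beta\circ\alpha$ equals $\alpha\beta$ or $\beta\alpha$ in $M'$), and making sure the element $\gamma = \beta\alpha\beta$ — rather than the ``naive'' candidate $\beta$, which need not satisfy $\beta\alpha = g(1)$ and $\alpha\beta = f(1)$ simultaneously with the correct normalization — is the one that is simultaneously a conjugation $g \Rightarrow f$ and a genuine two-sided inverse. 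Once the composition convention is fixed, every remaining step is a one-line manipulation inside $M'$ using only $\alpha\beta = f(1)$, $\beta\alpha = g(1)$, $f(m)\alpha = \alpha g(m)$, and the normalizations $f(1)\alpha = \alpha = \alpha g(1)$.
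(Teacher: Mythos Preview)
Your proposal is correct and follows essentially the same route as the paper: identify vertical composition of conjugations with multiplication in $M'$ (with $\mathrm{id}_f=f(1)$, $\mathrm{id}_g=g(1)$), then verify directly that $\gamma=\beta\alpha\beta$ is a conjugation $g\Rightarrow f$ and that $\alpha\gamma=f(1)$, $\gamma\alpha=g(1)$. The paper streamlines the intertwining step you hesitated over by first isolating the intermediate identity $g(m)\beta=\beta f(m)$ (from $g(m)\beta=g(1)g(m)\beta=\beta\alpha g(m)\beta=\beta f(m)\alpha\beta=\beta f(m)f(1)=\beta f(m)$), after which $g(m)\gamma=\gamma f(m)$ is immediate.
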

(Contrary to \cite{rogers2019toposes}, we do not assume \emph{a priori} that $\beta$ is a conjugation).
\begin{proof}
Consider $F,G:\DD(M)\to \DD(M')$. Then $\alpha: F\Rightarrow G$ is invertible if and only if there exists a conjugation $\beta$ such that their composite is the identity transformation. 
By composition, this implies that for all $x\in M$ $f(x)=\alpha g(x)\beta=f(x)\alpha\beta$ and in particular $f(1)=f(1)\alpha\beta$. But $f(1)\alpha=\alpha$ and we conclude that $f(1)=\alpha\beta$. The second identity is dual. \\
Conversely, suppose that there exists an element $\beta$ satisfying these identities. We prove that $\gamma=\beta\alpha\beta$ is a conjugation from $g\to f$. 
Let $m\in M$. Then \begin{eqnarray*}
g(m)\beta &=& g(1.m)\beta\\
&=& g(1)g(m)\beta\\
&=& \beta\alpha g(m)\beta\\
&=& \beta f(m)\alpha \beta \qquad \text{ (since $\alpha:f\to g$ is a conjugation)}\\
&=&\beta f(m)f(1)\\
&=&\beta f(m.1)\\
&=&\beta f(m)
\end{eqnarray*}
Thus $g(m)\gamma=g(m)\beta\alpha\beta=\beta g(m)g(1)=\beta g(1)g(m)=\gamma g(m)$. As also $g(1)\gamma=\gamma=\gamma f(1)$, we deduce that $\gamma$ is a conjugation from $g$ to $f$. 
Finally, we have that $\alpha\gamma=\alpha\beta\alpha\beta=f(1)\alpha\beta=\alpha\beta=f(1)$, and dually $\gamma\alpha=g(1)$. We deduce that the associated natural transformations are reciprocal.  \end{proof}

Observe that, under the assumptions of Lemma \ref{LemIConj}, $\beta$ is an inner inverse of $\alpha$ in the monoid $M'$. In particular, $\alpha\beta, \beta\alpha$ are idempotents and the inverse conjugation $\gamma=\beta\alpha\beta$ is a reflexive inverse of $\alpha$. Also, we see that in order to be related by an invertible conjugation, two homomorphisms must necessarily satisfy that $f(1)$ and $g(1)$ are isomorphic idempotents (where two idempotents $e,f$ in a monoid $M'$ are \emph{isomorphic} if there exists $x,y\in M'$ such that $e=xy$ and $f=yx$, equiv.\ $eM'\simeq fM'$ as right $M'$-acts, equiv.\ $e\cD f$ where $\cD$ denotes Green's equivalence relation).

\begin{example}
Let $S(2)$ be the symmetric group on $\{1,2\}$ and $T(4)$ be the transformation semigroup on $\{1,2,3,4\}$. Let $f:S(2)\to T(4)$ than sends $(a\;b)$ to $(a\; b\; 3\;4)$, and $g:S(2)\to (T(3)$ than sends $(a\;b)$ to $(a\; b\; 3\;3)$. $f$ and $g$ are semigroup homomorphisms. Consider $\alpha=(1\;2\;3\;3)$. We claim that $\alpha:f\Rightarrow g$ is a conjugation. Since $S(2)$ contains only two elements, we check directly the identity $f(m) \alpha  = \alpha g(m)$ on each element $m=(1\;2)$ and $m=(2\;1)$.
\begin{eqnarray*}
f(1\;2)\alpha&=&(1\; 2\; 3\;4)(1\; 2\; 3\;3)=(1\; 2\; 3\;3)=(1\; 2\; 3\;3)(1\; 2\; 3\;3)=\alpha g(1\;2)\\
f(2\;1)\alpha&=&(2\; 1\; 3\;4)(1\; 2\; 3\;3)=(2\; 1\; 3\;3)=(1\; 2\; 3\;3)(2\; 1\; 3\;3)=\alpha g(2\;1)
\end{eqnarray*}
The conjugation $\alpha$ cannot be invertible because $f(1\;2)=(1\;2\;3\;4)$ and $g(1\;2)=(1\;2\;3\;3)$, and these transformations are not isomorphic idempotents (and thus $\alpha\beta=f(1\;2)$ and $\beta\alpha=g(1\;2)$ is not solvable).\\
If we consider the third homomorphism $h:S(2)\to T(4)$ than sends $(a\;b)$ to $(a\; b\; 4\;4)$, then we see that $\alpha:h\to g$ defined by the element $\alpha=(1\; 2\; 3\;3)$ is an invertible conjugation, with inverse $\beta:g\to h$ defined by the element $\beta=(1\;2\;4\;4)$. 
\end{example}

We are finally in position to consider Morita equivalence.
Let $M$ be a monoid, $e\in E(M)$ be an idempotent such that $MeM=M$, and  $M'\simeq eMe$. Without loss of generality, we can assume that $M'=eMe$. Let $x,y\in M$ be such that $xey=1$, and pose $\eta=xe, \beta=ey$. Define $f:M\to eMe$ by $f(m)=eymxe$ for all $m\in M$, and $g:M'=eMe\to M$ by $g(m')=m'$. $f$ and $g$ are semigroup homomorphisms. It holds that $g\circ f:M\to M$, $m\mapsto eymxe$ and $f\circ g:eMe\to eMe$, $m'\mapsto eym'xe$. $\eta:Id_M\to g\circ f$ is a conjugation since $\eta=\eta (g\circ f)(1)$ ($xe=xee$) and for all $m\in M$, $m\eta=\eta (g\circ f)(m)$ ($mxe=xe(eymxe)$). Dually $\beta:g\circ f\to Id_{M}$ is a conjugation, and the reciprocal of $\eta$. Similarly, let $\varepsilon=\beta e=eye$. Then $\varepsilon:f\circ g\to Id_{M'}$ is a conjugation since $(f\circ g)(1')\varepsilon=\varepsilon$ ($(eyexe)eye=eye$) and for all $m\in M$, $(f\circ g)(m')\varepsilon=\varepsilon m'$ ($(eym'xe)eye=eym'e=eym'$). It is also invertible with inverse $e\eta$. Finally, we consider the triangle identities. 
We have that $f(1)=eyxe$  and $f(\eta)\varepsilon=(eyxexe)(eye)=eyxe$, so that $f(1)=f(\eta)\varepsilon$. Dually $g(e)=e=(xe)(eye)=\eta g(\varepsilon)$.
This proves that $M$ and $M'=eMe$ are equivalent in the 2-category $\mathbf{Mon_s}$.

Conversely, suppose that $M$ and $M'$ are equivalent in the 2-category. Then there exist two semigroup homomorphisms $f:M\to M'$ and $g:M'\to M$, and two invertible conjugations $\eta\in M$ and $\varepsilon\in M'$ such that $\eta=\eta g(f(1))$ and for all $m\in M$, $m\eta=\eta g(f(m))$, and $f(g(1'))\varepsilon=\varepsilon$ and for all $m'\in M'$, $f(g(m'))\varepsilon =\varepsilon m'$.
The triangle identities read $f(1)=f(\eta)\varepsilon$ and $g(1')=\eta g(\varepsilon)$. Pose $e=g(1')$ (so that $\eta g(\varepsilon)=e$).
Let $\beta$ be the inverse of $\eta$. Then $\eta\beta=1$ and $\beta\eta=(g\circ f)(1)$. 
Let also $\mu$ be the inverse if $\varepsilon$. Then $\varepsilon\mu=(f\circ g)(1')=f(e)$ and $\mu\varepsilon=1'$. 
Moreover, as it is classical for units and counits of adjoint equivalences, their reciprocal are respectively counits and units of the adjoint equivalence in reverse order, so that $e=g(1')=g(\mu)\beta$ and $f(1)=\mu f(\beta)$. 
The element $e=g(1')$ is an idempotent of $M$, unit of the monoid $eMe$, and $g(M')\subseteq eMe$. Indeed, let $m'\in M'$. Then $g(m')=g(1'm'1')=eg(m')e$. For its reciprocal, define $h:eMe\to M'$ by $h(m)=\mu f(m)\varepsilon$, and let $m'\in M'$. Then $h(g(m'))=\mu f(g(m'))\varepsilon=\mu\varepsilon m'=m'$ (since $\varepsilon:f \circ g\to Id_{M'}$ is a conjugation). Let now $m\in eMe$. Then \[g(h(m))=g(\mu) g(f(m))g(\varepsilon)=g(\mu)g(f(m))g(f(1))g(\varepsilon)\] 
and using that $\beta\eta=(g\circ f)(1)$ we obtain
\[g(h(m))=g(\mu)g(f(m))\beta\eta g(\varepsilon)=g(\mu)g(f(m))\beta e=g(\mu)\beta m e= eme.\]
This proves that $M'\simeq eMe$.
Our final task is to prove that $MeM=M$. But $1=\eta\beta$ and $\eta=\eta g(f(1))=\eta g(1'f(1))=\eta g(1') g(f(1))$, so that $1=\eta e g(f(1)) \beta$.

We have proved:
\begin{theorem}
Two monoids $M$ and $M'$ are Morita equivalent if and only if there exists an adjoint equivalence $f:M\to M':g$ in the 2-category $\mathbf{Mon_s}$.
\end{theorem}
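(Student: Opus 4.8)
The plan is to prove both directions by passing through the Knauer--Banaschewski characterization recalled above: $M$ and $M'$ are Morita equivalent if and only if there is an idempotent $e\in M$ with $MeM=M$ and $M'\simeq eMe$. The two preceding lemmas then let us translate between this monoid-theoretic data and adjoint equivalences in $\mathbf{Mon_s}$.

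For the implication ``Morita equivalent $\Rightarrow$ adjoint equivalence'', assume without loss of generality that $M'=eMe$ with $e^2=e$, $MeM=M$, and fix $x,y\in M$ with $xey=1$. I would take $f:M\to eMe$, $f(m)=eymxe$, and $g:eMe\to M$ the inclusion; both are semigroup homomorphisms. Setting $\eta=xe$, $\beta=ey$, $\varepsilon=eye$, one checks directly that $\eta:\mathrm{Id}_M\Rightarrow g\circ f$ is a conjugation and that $\beta$ is its inverse in the sense of Lemma~\ref{LemIConj} (indeed $\eta\beta=xey=1$ and $\beta\eta=(g\circ f)(1)$); dually $\varepsilon:f\circ g\Rightarrow\mathrm{Id}_{M'}$ is an invertible conjugation. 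It then remains to verify the two triangle identities in the form supplied by the lemma on adjunctions, namely $f(1)=f(\eta)\varepsilon$ and $g(e)=\eta g(\varepsilon)$, both of which are one-line computations; this produces the desired adjoint equivalence $f\dashv g$ in $\mathbf{Mon_s}$.

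For the converse, start from $f,g$ and invertible conjugations $\eta:\mathrm{Id}_M\Rightarrow g\circ f$, $\varepsilon:f\circ g\Rightarrow\mathrm{Id}_{M'}$ satisfying the triangle identities, and set $e=g(1')$. Since $g$ is a homomorphism, $e$ is idempotent and is the identity of the submonoid $eMe$, into which $g$ lands, as $g(m')=g(1'm'1')=eg(m')e$. To show $M'\simeq eMe$ I would define $h:eMe\to M'$ by $h(m)=\mu f(m)\varepsilon$, where $\mu$ is the inverse conjugation of $\varepsilon$ and $\beta$ that of $\eta$; using $\mu\varepsilon=1'$ one gets $h\circ g=\mathrm{Id}_{M'}$ immediately, and using $\beta\eta=(g\circ f)(1)$ together with $g(1')=g(\mu)\beta$ (the standard fact that the reverse adjoint equivalence has unit and counit the original counit and unit) one computes $g(h(m))=eme$, so $g\circ h=\mathrm{Id}_{eMe}$. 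Finally $MeM=M$ follows from $1=\eta\beta$ and $\eta=\eta g(f(1))=\eta g(1')g(f(1))=\eta\,e\,g(f(1))$, whence $1=\eta\,e\,g(f(1))\,\beta\in MeM$. Knauer--Banaschewski then gives Morita equivalence.

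The main obstacle is the bookkeeping in the converse direction: one must be disciplined about which of $\eta,\beta,\varepsilon,\mu$ plays the role of unit versus counit in which of the two adjoint equivalences (the original one and the reverse one), and repeatedly rewrite products using the conjugation axioms $\eta=\eta(g\circ f)(1)$, $m\eta=\eta (g\circ f)(m)$ and their $\varepsilon$-analogues together with the triangle identities, in order to collapse $g(h(m))$ to $eme$. The forward direction and all the conjugation-axiom verifications are routine once the elements $\eta=xe$, $\beta=ey$, $\varepsilon=eye$ have been guessed from the enlargement data.
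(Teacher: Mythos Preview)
Your proposal is correct and follows essentially the same route as the paper: the same enlargement data $f(m)=eymxe$, $g=\text{inclusion}$, $\eta=xe$, $\beta=ey$, $\varepsilon=eye$ for the forward direction, and for the converse the same idempotent $e=g(1')$, the same inverse map $h(m)=\mu f(m)\varepsilon$, the same reverse-adjunction identity $g(1')=g(\mu)\beta$, and the same computation $1=\eta\,e\,g(f(1))\,\beta$ for $MeM=M$. The only ingredient you leave implicit is that $\beta$ (being the inverse of $\eta$ via Lemma~\ref{LemIConj}) is itself a conjugation $g\circ f\Rightarrow \mathrm{Id}_M$, which is what gives $(g\circ f)(m)\beta=\beta m$ in the collapse of $g(h(m))$; the paper uses this in the same way.
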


\section{Acknowledgements}
The content of this article was presented in a plenary talk at ICSOA 2024 (Kerala University, India). I once again express my gratitude to the organizers, especially A.R. Rajan and P.G. Romeo, for their kind invitation.\\
I also wish to express my heartfelt thanks to my colleague F. Metayer, for numerous helpful conversations on category theory.

\bibliographystyle{amsplain}
\bibliography{BiblioRingsSemigroupsCat22}

\end{document}